\numberwithin{equation}{section}
\newtheorem{theorem}{Theorem}[section]
\newtheorem{remark}[theorem]{Remark}
\newtheorem{definition}[theorem]{Definition}
\author[Ye. Hnyp, V. Mikhailets, A. Murach]
{Yevheniia Hnyp, Vladimir Mikhailets, Aleksandr Murach}
\title[Parameter-dependent boundary-value problems in Sobolev spaces]
{Parameter-dependent one-dimensional\\ boundary-value problems in Sobolev spaces}
\address{Yevheniia Hnyp \newline
Institute of Mathematics of National Academy of Sciences of Ukraine \\
Tereshchenkivska Str. 3, 01004 Kyiv, Ukraine}
\email{evgeniyagnyp27@gmail.com}
\address{Vladimir Mikhailets \newline
Institute of Mathematics of National Academy of Sciences of Ukraine \\
Tereshchenkivska Str.~3, 01004 Kyiv, Ukraine}
\email{mikhailets@imath.kiev.ua}
\address{Aleksandr Murach \newline
Institute of Mathematics of National Academy of Sciences of Ukraine \\
Tereshchenkivska Str.~3, 01004 Kyiv, Ukraine;
\newline Chernihiv National Pedagogical University, Het'mana Polubotka Str.~53, 14013  Chernihiv, Ukraine}
\email{murach@imath.kiev.ua}
\subjclass[2010]{34B08}
\keywords{Differential system; boundary-value problem; Sobolev space; continuity in parameter.}
\begin{document}

\maketitle

\begin{abstract}
We consider the most general class of linear boundary-value problems for higher-order ordinary differential systems whose solutions and right-hand sides belong to the corresponding Sobolev spaces. For parameter-depen\-dent problems from this class, we obtain a constructive criterion under which their solutions are continuous in the Sobolev space with respect to the parameter. We also obtain a two-sided estimate for the degree of convergence of these solutions to the solution of the nonperturbed problem. These results are applied to a new broad class of parameter-dependent multipoint boundary-value problems.
 \end{abstract}
\vspace{1cm}

\section{Introduction}\label{sec_Int}

\noindent Questions concerning verification of limit transition in parameter-dependent differential equations arise in various mathematical problems. These questions are best cleared up for ordinary differential systems of the first order. Gikhman~\cite{Gikhman1952}, Krasnoselskii and S.~Krein~\cite{KrasnoselskiiKrein1955}, Kurzweil and Vorel~\cite{KurzweilVorel1957} obtained fundamental results on the continuity in a parameter of solutions to the Cauchy problem for nonlinear differential systems. For linear systems, these results were refined and supplemented by Levin~\cite{Levin1967}, Opial~\cite{Opial1967}, Reid~\cite{Reid1967}, and Nguen~\cite{Nguen1993}.

Parameter-dependent boundary-value problems are less investigated than the Cauchy problem. Kiguradze \cite{Kiguradze1975, Kiguradze1987, Kiguradze2003} and then Ashordia \cite{Ashordia1996} introduced and investigated a class of general linear boundary-value problems for systems of first order differential equations. Kiguradze and Ashordia obtained conditions under which the solutions to the parameter-dependent problems from this class are continuous with respect to the parameter in the normed space $C([a,b],\mathbb{R}^{m})$. Recently  \cite{KodlyukMikhailetsReva2013, MikhailetsChekhanova2015} these results were refined and extended to complex-valued functions and systems of higher order differential equations.

New broad classes of linear boundary-value problems for differential systems are considered in \cite{KodlyukMikhailets2013JMS, GnypKodlyukMikhailets2015UMJ}. These classes relate to the classical scale of complex Sobolev spaces and are introduced for the systems whose right-hand sides and solutions run through the corresponding Sobolev spaces. The boundary conditions for these systems are posed in the most general form by means of an arbitrary continuous linear operator given on the Sobolev space of the solutions. Therefore it is naturally to say that these boundary-value problems are generic with respect to the corresponding Sobolev space.

Generally, the formally adjoint problem and Green formula are not defined for generic problems. Therefore the usual methods of the theory of ordinal differential equations are not applicable to these problems, and their study is of a special interest.

Investigating parameter-dependent generic boundary-value problems, Hnyp, Ko\-dlyuk and Mikhailets \cite{KodlyukMikhailets2013JMS, GnypKodlyukMikhailets2015UMJ} found constructive sufficient conditions under which the solutions to these problems exist and are unique for small values of the positive parameter and are continuous with respect to the parameter in the Sobolev space. (The paper \cite{KodlyukMikhailets2013JMS} deals with the systems of first-order differential equations, whereas \cite{GnypKodlyukMikhailets2015UMJ} investigates the systems of higher-order differential equations.)

The goal of the present paper is to prove that these conditions are necessary as well. So, we prove a constructive criterion for continuity of the solutions with respect to the parameter in Sobolev spaces. Besides, we establish a two-sided estimate for the degree of convergence of the solutions as the parameter approaches zero. An application of this criterion to multipoint parameter-dependent boundary-value problems is also given. Namely, we introduce a new broad class of these problems and obtain explicit sufficient conditions under which the solutions to these problems are continuous with respect to the parameter in Sobolev spaces.

Note that other important classes of generic boundary-value problems are introduced and investigated in papers \cite{MikhailetsChekhanova2014DAN7, Soldatov2015UMJ}. These classes relate to the classical scale of normed spaces of continuously differential functions. Sufficient conditions for the continuous dependence on the parameter of solutions to these problems are established in these papers.

The above-mentioned results were applied to investigation of multipoint boun\-da\-ry-value problems \cite{Kodlyuk2012Dop11} and Green matrixes of boundary-value problems \cite{KodlyukMikhailetsReva2013, MikhailetsChekhanova2015}, to the spectral theory of differential operators with singular coefficients \cite{GoriunovMikhailets2010MN, GoriunovMikhailetsPankrashkin2013EJDE, GoriunovMikhailets2012UMJ}. The latter application stimulates us to consider differential equations with complex-valued coefficients and right-hand sides.

The approach used in the present paper can be applied to investigation of boundary-value problems which are generic with respect to other normed function spaces (see \cite{MikhailetsMurachSoldatov2016EJQTDE, MikhailetsMurachSoldatov2016MFAT}).

\section{Statement of problem and main results}\label{sec2}

\noindent We arbitrarily choose integers $n\geq0$ and $m,r\geq1$, a real number $p\in[1,\infty)$ and compact interval $[a,b]\subset\nobreak\mathbb{R}$.
We use the complex Sobolev spaces
\begin{equation*}
W_p^n:=W^n_p([a,b],\mathbb{C}),\quad
(W^n_p)^m:=W_p^n([a,b],\mathbb{C}^m),
\end{equation*}
and
\begin{equation*}
(W_p^n)^{m\times m}:=W_p^n([a,b],\mathbb{C}^{m\times m})
\end{equation*}
formed respectively by scalar, vector-valued, and matrix-valued functions defined on $[a,b]$. Recall that the norm in the Banach space $W_p^n$ is defined by the formula
\begin{equation*}
\|x\|_{n,p}:=\Biggl(\,\sum\limits_{j=0}^{n}\,\int\limits_a^b
|x^{(j)}(t)|^p\,dt\Biggr)^{1/p}\quad\mbox{for}\quad x\in W_p^n.
\end{equation*}
Note that $W_p^0$ is the Lebesgue space $L_p:=L_p([a,b],\mathbb{C})$. The norms in the Banach spaces
$\left(W^n_p\right)^m$ and $\left(W_p^n\right)^{m\times m}$ are the sums of the norms in $W_p^n$ of all components of vector-valued or matrix-valued functions from these spaces. We denote these norms by $\|x\|_{n,p}$ as well; it will be always clear from context in which Sobolev space (scalar or vector-valued or matrix-valued functions) these norms are considered.

Let $\varepsilon_0>0$, and let the parameter $\varepsilon$ run through $[0,\varepsilon_0)$. We consider the following parameter-dependent boundary-value problem for a system of $m$ linear differential equations of order~$r$:
\begin{gather}\label{bound_pr_eps_1}
L(\varepsilon)y(t,\varepsilon)\equiv y^{(r)}(t,\varepsilon)+ \sum\limits_{j=1}^{r}A_{r-j}(t,\varepsilon)y^{(r-j)}(t,\varepsilon)=
f(t,\varepsilon),\quad a\leq t\leq b,\\
B(\varepsilon)y(\cdot,\varepsilon)=c(\varepsilon). \label{bound_pr_eps_2}
\end{gather}
Here, for every $\varepsilon\in[0,\varepsilon_0)$, the unknown vector-valued function $y(\cdot,\varepsilon)$ belongs to the space $(W^{n+r}_p)^m$, and we arbitrarily choose the matrix-valued functions $A_{r-j}(\cdot,\varepsilon)\in(W_p^n)^{m\times m}$ with $j\in\{1,\ldots,r\}$, vector-valued function
$f(\cdot,\varepsilon)\in(W^n_p)^m$, vector $c(\varepsilon)\in\mathbb{C}^{rm}$, and
continuous linear operator
\begin{equation}\label{oper_B_class}
B(\varepsilon):(W^{n+r}_p)^m\rightarrow\mathbb{C}^{rm}.
\end{equation}
Throughout the paper, we interpret vectors as columns. Note that the functions $A_{r-j}(t,\varepsilon)$ are not assumed to have any regularity with respect to $\varepsilon$.

Let us indicate the sense in which equation \eqref{bound_pr_eps_1} is understood. If $n\geq1$, then the solution $y(\cdot,\varepsilon)\in(W^{n+r}_p)^m$ belongs to the space $(C^{r})^{m}:=C^{r}([a,b],\mathbb{C}^{m})$ by the Sobolev embedding theorem and then equality \eqref{bound_pr_eps_1} should be fulfilled at every point $t\in[a,b]$. (In this case, all components of $A_{r-j}(\cdot,\varepsilon)$ and $f(\cdot,\varepsilon)$ are continuous on $[a,b]$.) If $n=0$, then $y(\cdot,\varepsilon)\in(W^{n+r}_p)^m\subset(C^{r-1})^{m}$ by this theorem and, moreover, $y^{(r-1)}(\cdot,\varepsilon)$ is absolutely continuous on $[a,b]$. Hence in this case, the classical derivative $y^{(r)}(\cdot,\varepsilon)$ exists almost everywhere on $[a,b]$, and therefore we require equality \eqref{bound_pr_eps_1} to be fulfilled  almost everywhere on $[a,b]$.

Note that the boundary condition \eqref{bound_pr_eps_2} with the arbitrary continuous operator $B(\varepsilon)$ is the most general for the differential system \eqref{bound_pr_eps_1}. Indeed, if the right-hand side $f(\cdot,\varepsilon)$ of the system runs through the whole space  $(W^{n}_p)^m$, then the solution $y(\cdot,\varepsilon)$ to the system runs through the whole space $(W^{n+r}_p)^m$. This condition covers both all kinds of classical boundary conditions (such as initial conditions of the Cauchy problem, multipoint and integral boundary conditions) and nonclassical boundary conditions which contain the derivatives $y^{(k)}(\cdot,\varepsilon)$, with $r\leq k\leq n+r$, of the unknown function. Therefore the boundary-value problem \eqref{bound_pr_eps_1}, \eqref{bound_pr_eps_2} is called generic with respect to the Sobolev space $W^{n+r}_p$.

For every $\varepsilon\in[0,\varepsilon_{0})$, the continuous linear operator \eqref{oper_B_class} admits the following unique representation \cite[Section~0.1]{IoffeTixonov1974}:
\begin{equation}\label{oper_B_represent}
B(\varepsilon)y=\sum\limits_{k=1}^{n+r}\alpha_k(\varepsilon) y^{(k-1)}(a)+\int\limits_a^b\Phi(t,\varepsilon)y^{(n+r)}(t)dt
\end{equation}
for arbitrary $y\in(W^{n+r}_p)^{m}$. Here, each $\alpha_k(\varepsilon)$ is a number $rm\times m$-matrix, and $\Phi(\cdot,\varepsilon)$ is a matrix-function from the space $(L_q)^{rm\times m}$ with the index $q\in(1,\infty]$ subject to $1/p+1/q=1$.

With the boundary-value problem \eqref{bound_pr_eps_1}, \eqref{bound_pr_eps_2}, we associate the continuous linear operator
\begin{equation}\label{6.LBe}
(L(\varepsilon),B(\varepsilon)):(W^{n+r}_p)^{m}\rightarrow (W^{n}_p)^{m}\times\mathbb{C}^{rm}.
\end{equation}
According to \cite[Theorem~1]{GnypKodlyukMikhailets2015UMJ}, operator \eqref{6.LBe} is Fredholm with zero index for every $\varepsilon\in[0,\varepsilon_{0})$.

Let us now give our basic

\begin{definition}\label{defin_2} \rm
We say that the solution to the boundary-value problem  \eqref{bound_pr_eps_1}, \eqref{bound_pr_eps_2} depends continuously on the parameter $\varepsilon$ at $\varepsilon=0$ if the following two conditions are fulfilled:
\begin{itemize}
\item [$(\ast)$] There exists a positive number $\varepsilon_{1}<\varepsilon_{0}$ such that for arbitrary   $\varepsilon\in\nobreak[0,\varepsilon_{1})$, $f(\cdot,\varepsilon)\in(W_p^n)^m$, and $c(\varepsilon)\in\mathbb{C}^{rm}$ this problem has a unique solution $y(\cdot,\varepsilon)\in(W_p^{n+r})^{m}$.
\item [$(\ast\ast)$] The convergence of the right-hand sides
$f(\cdot,\varepsilon)\to f(\cdot,0)$ in $(W_p^n)^{m}$ and $c(\varepsilon)\to c(0)$ in $\mathbb{C}^{rm}$ as $\varepsilon\to0+$
implies the convergence of the solutions $y(\cdot,\varepsilon)\to y(\cdot,0)$ in $(W_p^{n+r})^{m}$ as $\varepsilon\to0+$.
\end{itemize}
\end{definition}

\begin{remark}\rm
We will obtain an equivalent of Definition~\ref{defin_2} if we replace $(\ast\ast)$ with the following condition: $y(\cdot,\varepsilon)\to y(\cdot,0)$ in $(W_p^{n+r})^{m}$ as $\varepsilon\to0+$ provided that $f(\cdot,\varepsilon)=f(\cdot,0)$ and $c(\varepsilon)=c(0)$ for all sufficiently small $\varepsilon>0$. Indeed, this condition together with  $(\ast)$ means that the operator $(L(\varepsilon),B(\varepsilon))^{-1}$, inverse of \eqref{6.LBe}, converges strongly to $(L(0),B(0))^{-1}$ as $\varepsilon\to\infty$. The latter property implies $(\ast\ast)$.
\end{remark}

Following \cite[Section~3]{GnypKodlyukMikhailets2015UMJ}, we consider the next two conditions on the left-hand sides of this problem.

\medskip

\noindent\textbf{Limit Conditions} as $\varepsilon\to0+$:
\begin{itemize}
  \item  [(I)] $A_{r-j}(\cdot,\varepsilon)\to A_{r-j}(\cdot,0)$ in $(W_p^n)^{m\times m}$ for each $j\in\{1\ldots r\}$;
  \item [(II)] $B(\varepsilon)y\to B(0)y$ in $\mathbb{C}^{rm}$ for every $y\in(W_p^{n+r})^m$.
\end{itemize}

We also consider

\medskip

\noindent\textbf{Condition (0).} \it  The homogeneous limiting boundary-value problem
\begin{equation*}
L(0)y(t,0)=0,\quad a\leq t\leq b,\qquad B(0)y(\cdot,0)=0
\end{equation*}
has only the trivial solution.\rm

\medskip

Our main result is

\begin{theorem}\label{main_th}
The solution to the boundary-value problem \eqref{bound_pr_eps_1}, \eqref{bound_pr_eps_2} depends continuously on the parameter $\varepsilon$ at $\varepsilon=0$ if and only if this problem satisfies Condition~{\rm(0)} and Limit Conditions {\rm(I)} and {\rm(II)}.
\end{theorem}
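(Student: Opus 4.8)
The plan is to quote sufficiency from the earlier work and to concentrate on necessity. For sufficiency, Condition~{\rm(0)} makes the Fredholm index-zero operator $(L(0),B(0))$ injective, hence invertible, while Limit Conditions {\rm(I)} and {\rm(II)} amount to strong convergence $(L(\varepsilon),B(\varepsilon))\to(L(0),B(0))$; combined with the compact structure of the lower-order and boundary terms, the argument of \cite{GnypKodlyukMikhailets2015UMJ} gives invertibility of \eqref{6.LBe} for small $\varepsilon$ and strong convergence of the inverses, i.e. continuous dependence. So I would focus on the converse. Throughout write $\Lambda(\varepsilon):=(L(\varepsilon),B(\varepsilon))$ for the operator \eqref{6.LBe}. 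By the Remark, continuous dependence is equivalent to: $\Lambda(\varepsilon)$ is invertible for $\varepsilon\in[0,\varepsilon_{1})$ and $\Lambda(\varepsilon)^{-1}\to\Lambda(0)^{-1}$ strongly as $\varepsilon\to0+$. Condition~{\rm(0)} is then immediate, since $(\ast)$ at $\varepsilon=0$ says the homogeneous limiting problem has only the trivial solution; and by Banach--Steinhaus the strongly convergent family of inverses is uniformly bounded, $M:=\sup_{\varepsilon}\|\Lambda(\varepsilon)^{-1}\|<\infty$.

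For {\rm(I)} I would use a ``fundamental system'' extraction. Fix $\varepsilon$-independent vector-functions $z_1,\dots,z_{rm}\in(W_p^{n+r})^m$ (suitable polynomial columns) whose jet matrix $Z(t)$, with $i$-th column $(z_i(t);z_i'(t);\dots;z_i^{(r-1)}(t))$, is invertible for every $t\in[a,b]$. Feeding the fixed data $(f_i,c_i):=(L(0)z_i,B(0)z_i)$ into the problem, the solutions $y_i(\cdot,\varepsilon):=\Lambda(\varepsilon)^{-1}(f_i,c_i)$ converge to $z_i$ in $(W_p^{n+r})^m$ by strong convergence of the inverses. Reindexing the differential equation $L(\varepsilon)y_i(\cdot,\varepsilon)=f_i$ gives $\sum_{k=0}^{r-1}A_k(\cdot,\varepsilon)y_i^{(k)}(\cdot,\varepsilon)=f_i-y_i^{(r)}(\cdot,\varepsilon)\to f_i-z_i^{(r)}$ in $(W_p^n)^m$, so, collecting columns, $\mathcal{A}(\cdot,\varepsilon)Y(\cdot,\varepsilon)\to\mathcal{A}(\cdot,0)Z$ in $(W_p^n)^{m\times rm}$, where $\mathcal{A}:=[A_0|\cdots|A_{r-1}]$ and $Y(\cdot,\varepsilon)$ is the jet matrix of the $y_i(\cdot,\varepsilon)$. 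Since $y_i^{(k)}(\cdot,\varepsilon)\to z_i^{(k)}$ in $(W_p^{n+r-k})^m\hookrightarrow C([a,b])^m$ for $k\le r-1$, we have $Y(\cdot,\varepsilon)\to Z$ uniformly; as $\det Z$ is bounded away from $0$, the inverse $Y(\cdot,\varepsilon)^{-1}$ exists for small $\varepsilon$ and converges to $Z^{-1}$ (in $W_p^{n+1}$ for $n\ge1$ by the Banach-algebra property of $W_p^{n+1}$, and uniformly for $n=0$). Multiplying, $\mathcal{A}(\cdot,\varepsilon)=[\mathcal{A}(\cdot,\varepsilon)Y(\cdot,\varepsilon)]\,Y(\cdot,\varepsilon)^{-1}\to\mathcal{A}(\cdot,0)$ in $(W_p^n)^{m\times rm}$, which is {\rm(I)}. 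Notably this needs no a priori bound on the coefficients: the products converge directly from the equation.

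For {\rm(II)} I expect the genuine obstacle. Once {\rm(I)} holds, $L(\varepsilon)\to L(0)$ in operator norm, so $\sup_\varepsilon\|L(\varepsilon)\|<\infty$; but no a priori bound on $\sup_\varepsilon\|B(\varepsilon)\|$ is available, and strong convergence of the inverses does \emph{not} in general force strong convergence of the operators. I would circumvent this using the finite rank of the boundary part. Fix $u\in(W_p^{n+r})^m$ and set $\gamma_\varepsilon:=(B(\varepsilon)-B(0))u\in\mathbb{C}^{rm}$. Applying the resolvent identity $\Lambda(0)^{-1}-\Lambda(\varepsilon)^{-1}=\Lambda(\varepsilon)^{-1}(\Lambda(\varepsilon)-\Lambda(0))\Lambda(0)^{-1}$ to the datum $(L(0)u,B(0)u)$ and discarding the $L$-component via {\rm(I)} and the bound $M$, one obtains $R(\varepsilon)\gamma_\varepsilon\to0$ in $(W_p^{n+r})^m$, where $R(\varepsilon)\beta:=\Lambda(\varepsilon)^{-1}(0,\beta)$. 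Here $\|R(\varepsilon)\|\le M$, $R(\varepsilon)\to R(0)$ strongly, and $R(0):\mathbb{C}^{rm}\to\ker L(0)$ is injective, hence bounded below. As $\mathbb{C}^{rm}$ is finite-dimensional, if $\gamma_\varepsilon\not\to0$ then some sequence $\gamma_{\varepsilon_k}$ stays bounded away from $0$; after normalizing when unbounded, a subsequence converges to $\gamma_*\ne0$, and then $R(\varepsilon_k)\gamma_{\varepsilon_k}\to R(0)\gamma_*$ by the strong convergence together with $\|R(\varepsilon_k)\|\le M$. Since the left-hand side tends to $0$, this forces $R(0)\gamma_*=0$, whence $\gamma_*=0$, a contradiction. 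Thus $\gamma_\varepsilon\to0$, i.e. $B(\varepsilon)u\to B(0)u$ in $\mathbb{C}^{rm}$, which is {\rm(II)}. The hard part is exactly this circumvention of the missing uniform bound on $\|B(\varepsilon)\|$, where finite-dimensionality of the boundary space is essential; the only other delicate points are the multiplier and Banach-algebra bookkeeping for passing products and $Y(\cdot,\varepsilon)^{-1}$ through the $W_p^n$-norms, with the non-algebra case $n=0$ handled separately through the embedding into $C([a,b])$.
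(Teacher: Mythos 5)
Your proof is correct, and its skeleton (sufficiency quoted from the earlier papers, Condition~(0) immediate from $(\ast)$, then Limit Condition~(I), then (II)) coincides with the paper's; the differences lie in the technical routes, slightly for (I) and substantially for (II). For (I), the paper reduces to a first-order system and solves the matrix problem \eqref{matrix-eq}, \eqref{matrix-bound-cond} -- the homogeneous equation with identity boundary data -- so that $X(\cdot,\varepsilon)$ is a fundamental matrix, nonsingular at every $t$ for \emph{every} $\varepsilon$ by a linear-independence argument, and recovers $\widetilde{A}(\cdot,\varepsilon)=-X'(\cdot,\varepsilon)X(\cdot,\varepsilon)^{-1}$; you instead feed the data $(L(0)z_i,B(0)z_i)$ of fixed polynomial columns and invert the jet matrix $Y(\cdot,\varepsilon)$, whose nonsingularity you obtain only for small $\varepsilon$ from uniform convergence to the everywhere-invertible $Z$. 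This is the same idea -- recover the coefficients from strongly convergent solutions by matrix inversion, with identical Banach-algebra versus $n=0$ bookkeeping -- just with a different choice of right-hand sides and a softer justification of invertibility. For (II) your route is genuinely different: the paper first proves the uniform bound \eqref{bound-norm-B} on $\|B(\varepsilon)\|$ by contradiction (normalized witnesses $w_k$, rescaling, extraction of a convergent subsequence $c(\varepsilon^{(k)})\to c(0)\neq0$, and a clash with $(\ast\ast)$), and only then deduces (II) from the resulting bound \eqref{bound-norm-LB} on the \emph{forward} operators via a short operator identity; you bypass any bound on $\|B(\varepsilon)\|$ entirely, getting $R(\varepsilon)\gamma_\varepsilon\to0$ from the resolvent identity and the Banach--Steinhaus bound on the \emph{inverses}, and then forcing $\gamma_\varepsilon\to0$ from the injectivity of $\beta\mapsto(L(0),B(0))^{-1}(0,\beta)$ on $\mathbb{C}^{rm}$ via normalization and compactness. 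Both arguments lean essentially on $\dim\mathbb{C}^{rm}<\infty$ (the paper to extract $c(0)\neq0$, you for bounded-belowness and compactness of the normalized $\gamma_\varepsilon$). What the paper's ordering buys is the explicit a priori bound \eqref{bound-norm-LB} as a byproduct, in the spirit of what is reused in Theorem~\ref{4.th-bound_2}; what your route buys is economy -- no witness construction, and Step~1 is needed only through $(L(\varepsilon)-L(0))u\to0$. One further simplification is available to you: since the domain $\mathbb{C}^{rm}$ is finite-dimensional, the strong convergence $R(\varepsilon)\to R(0)$ is automatically convergence in operator norm, so with $\delta>0$ a lower bound for $R(0)$ one gets $\|\gamma_\varepsilon\|\leq\bigl(\delta-\|R(\varepsilon)-R(0)\|\bigr)^{-1}\|R(\varepsilon)\gamma_\varepsilon\|\to0$ directly, with no passage to subsequences.
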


We supplement this theorem with a two-sided estimate of the deviation $\|y(\cdot,0)-y(\cdot,\varepsilon)\|_{n+r,p}$ for sufficiently small $\varepsilon>0$. Let
\begin{equation*}
d_{n,p}(\varepsilon):=
\|L(\varepsilon)y(\cdot,0)-f(\cdot,\varepsilon)\|_{n,p}+
\|B(\varepsilon)y(\cdot,0)-c(\varepsilon)\|_{\mathbb{C}^{rm}}.
\end{equation*}

\begin{theorem}\label{4.th-bound_2}
Assume that the boundary-value problem \eqref{bound_pr_eps_1}, \eqref{bound_pr_eps_2} satisfies Condition~{\rm(0)} and Limit Conditions {\rm(I)} and {\rm(II)}. Then there exist positive numbers $\varepsilon_{2}<\varepsilon_{1}$, $\gamma_{1}$, and $\gamma_{2}$ such that for every   $\varepsilon\in(0,\varepsilon_{2})$ we have the two-sided estimate
\begin{equation}\label{6.bound}
\gamma_{1}\,d_{n,p}(\varepsilon)\leq
\|y(\cdot,0)-y(\cdot,\varepsilon)\|_{n+r,p}
\leq\gamma_{2}\,d_{n,p}(\varepsilon).
\end{equation}
Here, the numbers $\varepsilon_{2}$, $\gamma_{1}$, and $\gamma_{2}$ do not depend on $y(\cdot,0)$ and $y(\cdot,\varepsilon)$.
\end{theorem}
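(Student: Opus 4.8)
The plan is to reduce the two-sided estimate to uniform bounds, as $\varepsilon\to0+$, on the operator \eqref{6.LBe} and on its inverse. Write $T(\varepsilon):=(L(\varepsilon),B(\varepsilon))$ for the operator in \eqref{6.LBe}, and endow the target space $(W_p^n)^m\times\mathbb{C}^{rm}$ with the sum norm $\|(f,c)\|:=\|f\|_{n,p}+\|c\|_{\mathbb{C}^{rm}}$ (any equivalent norm only changes the constants). Since the hypotheses are exactly those of Theorem~\ref{main_th}, the problem depends continuously on $\varepsilon$ at $\varepsilon=0$; hence condition $(\ast)$ supplies $\varepsilon_1>0$ such that $T(\varepsilon)$ is invertible for all $\varepsilon\in[0,\varepsilon_1)$, and the remark following Definition~\ref{defin_2} shows that $T(\varepsilon)^{-1}\to T(0)^{-1}$ strongly as $\varepsilon\to0+$. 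The whole proof then rests on one algebraic identity: since $T(\varepsilon)y(\cdot,\varepsilon)=(f(\cdot,\varepsilon),c(\varepsilon))$, we have
\[
T(\varepsilon)\bigl(y(\cdot,0)-y(\cdot,\varepsilon)\bigr)=\bigl(L(\varepsilon)y(\cdot,0)-f(\cdot,\varepsilon),\,B(\varepsilon)y(\cdot,0)-c(\varepsilon)\bigr),
\]
whose target norm equals $d_{n,p}(\varepsilon)$ exactly.

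Granting uniform bounds $\|T(\varepsilon)\|\le M_1$ and $\|T(\varepsilon)^{-1}\|\le M_2$ for $\varepsilon\in(0,\varepsilon_2)$ with some $\varepsilon_2\le\varepsilon_1$, the theorem follows at once from this identity. Applying $T(\varepsilon)^{-1}$ to both sides gives $\|y(\cdot,0)-y(\cdot,\varepsilon)\|_{n+r,p}\le\|T(\varepsilon)^{-1}\|\,d_{n,p}(\varepsilon)\le M_2\,d_{n,p}(\varepsilon)$, which is the right-hand inequality with $\gamma_2:=M_2$. Conversely, $d_{n,p}(\varepsilon)\le\|T(\varepsilon)\|\,\|y(\cdot,0)-y(\cdot,\varepsilon)\|_{n+r,p}\le M_1\,\|y(\cdot,0)-y(\cdot,\varepsilon)\|_{n+r,p}$, which is the left-hand inequality with $\gamma_1:=1/M_1$. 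Both constants are independent of the data and of the solutions $y(\cdot,0)$, $y(\cdot,\varepsilon)$, as required.

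It remains to produce the two uniform bounds, and here I would invoke the Banach--Steinhaus theorem. For $\|T(\varepsilon)\|$: Limit Conditions (I) and (II) yield $L(\varepsilon)y\to L(0)y$ in $(W_p^n)^m$ and $B(\varepsilon)y\to B(0)y$ in $\mathbb{C}^{rm}$ for every fixed $y$ (in fact $L(\varepsilon)\to L(0)$ even in operator norm, since by (I) the coefficients converge in $(W_p^n)^{m\times m}$ and multiplication by them acts boundedly from $(W_p^{n+r})^m$ into $(W_p^n)^m$), so the family $\{T(\varepsilon)y\}$ is bounded for each $y$; hence $\sup_\varepsilon\|T(\varepsilon)\|<\infty$ near $0$. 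For $\|T(\varepsilon)^{-1}\|$: strong convergence of the inverses makes $\{T(\varepsilon)^{-1}g\}$ bounded for every fixed $g$, whence $\sup_\varepsilon\|T(\varepsilon)^{-1}\|<\infty$ near $0$. Since $\varepsilon$ runs over a continuum rather than a sequence, I would phrase each statement by contradiction: were the relevant norm unbounded, there would be a sequence $\varepsilon_k\to0+$ along which it blows up, contradicting Banach--Steinhaus applied to that sequence; this fixes $\varepsilon_2$ and the constants $M_1,M_2$.

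The conceptual heart is the uniform bound on the inverses, and I expect it to be the main obstacle. Granting Theorem~\ref{main_th} it is the painless Banach--Steinhaus step above, but a self-contained derivation (avoiding the full force of the main theorem) is the genuinely delicate point. One argues by contradiction: if $\|T(\varepsilon_k)^{-1}\|\to\infty$ for some $\varepsilon_k\to0+$, pick $u_k\in(W_p^{n+r})^m$ with $\|u_k\|_{n+r,p}=1$ and $T(\varepsilon_k)u_k\to0$. Norm convergence $L(\varepsilon_k)\to L(0)$ then forces $L(0)u_k\to0$. Let $\Lambda_0\colon y\mapsto\bigl(L(0)y,y(a),\dots,y^{(r-1)}(a)\bigr)$, an isomorphism of $(W_p^{n+r})^m$ onto $(W_p^n)^m\times\mathbb{C}^{rm}$ by unique solvability of the Cauchy problem; the Cauchy data of $u_k$ are bounded in the finite-dimensional $\mathbb{C}^{rm}$, so along a subsequence they converge to some $\xi$, whence $u_{k'}\to\Lambda_0^{-1}(0,\xi)=:u_*$ in $(W_p^{n+r})^m$ with $\|u_*\|_{n+r,p}=1$ and $L(0)u_*=0$. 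The crux is the limit transition in the boundary functional, i.e.\ showing $B(0)u_*=0$: one writes $B(\varepsilon_{k'})u_{k'}-B(0)u_*$ as the sum of $\bigl(B(\varepsilon_{k'})-B(0)\bigr)u_*\to0$ (Limit Condition~(II)), of $\bigl(B(\varepsilon_{k'})-B(0)\bigr)(u_{k'}-u_*)$, controlled by the uniform estimate for $\|B(\varepsilon)\|$ times $\|u_{k'}-u_*\|_{n+r,p}\to0$, and of $B(0)(u_{k'}-u_*)\to0$. Hence $B(0)u_*=0$, so $u_*\neq0$ solves the homogeneous limiting problem, contradicting Condition~(0).
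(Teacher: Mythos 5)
Your proposal is correct and follows essentially the same route as the paper: the same identity $(L(\varepsilon),B(\varepsilon))\bigl(y(\cdot,0)-y(\cdot,\varepsilon)\bigr)=\bigl(L(\varepsilon)y(\cdot,0)-f(\cdot,\varepsilon),\,B(\varepsilon)y(\cdot,0)-c(\varepsilon)\bigr)$, whose target norm is exactly $d_{n,p}(\varepsilon)$, combined with uniform bounds on $\|(L(\varepsilon),B(\varepsilon))\|$ and $\|(L(\varepsilon),B(\varepsilon))^{-1}\|$ deduced from the strong convergence of these operators and of their inverses (the latter via Theorem~\ref{main_th} and condition $(\ast\ast)$ of Definition~\ref{defin_2}) together with the Banach--Steinhaus theorem, using the same sequence-based contradiction to handle the continuum parameter. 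Your closing self-contained derivation of the bound on the inverses, via the Cauchy-problem isomorphism and Condition~(0), is sound but unnecessary for this theorem, since the paper simply invokes the strong convergence of $(L(\varepsilon),B(\varepsilon))^{-1}$ already provided by Definition~\ref{defin_2}.
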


In this theorem, we can interpret $\|y(\cdot,0)-y(\cdot,\varepsilon)\|_{n+r,p}$ and $d_{n,p}(\varepsilon)$ respectively as an error and discrepancy of the solution $y(\cdot,\varepsilon)$ to the problem \eqref{bound_pr_eps_1}, \eqref{bound_pr_eps_2} provided that $y(\cdot,0)$ is considered as an approximate solution to this problem. In this sense, formula \eqref{6.bound} means that the error and discrepancy are of the same degree as $\varepsilon\to0+$.

Note that Theorems \ref{main_th} and \ref{4.th-bound_2} are new even for classical boundary conditions, in which the orders of derivatives of the unknown function are less then~$r$. We will prove these theorems in Section~\ref{sec3}.

\begin{remark}\label{rem2.4}\rm
It follows directly from representation \eqref{oper_B_represent} and the criterion for weak convergence of continuous linear operators on $L_p$ \cite[Chapter~VIII, Section~3.3]{KantorovichAkilov64} that Limit Condition~(II) is equivalent to the following system of conditions as $\varepsilon\to0+$:
\begin{enumerate}
\item[(2a)] $\alpha_k(\varepsilon)\to\alpha_k(0)$ for each $k\in\{1,\dots,n+r\}$; \smallskip
\item[(2b)] $\|\Phi(\cdot,\varepsilon)\|_{(L_{q})^{rm\times m}}=O(1)$; \smallskip
\item[(2c)] $\int\limits_a^t\Phi(s,\varepsilon)ds\to\int\limits_a^t\Phi(s,0)ds$ for every $t\in(a,b]$.
\end{enumerate}
It is useful to compare these conditions with the criterion for the convergence $B(\varepsilon)\to B(0)$ in the uniform operator topology as $\varepsilon\to0+$. This convergence is equivalent to the system of conditions (2a) and $\Phi(\cdot,\varepsilon)\to\Phi(\cdot,0)$ in $(L_{q})^{rm\times m}$ as $\varepsilon\to0+$. The latter condition is evidently stronger than the pair of conditions (2b) and (2c). Note also that Limit Condition~II means the convergence $B(\varepsilon)\to B(0)$ as $\varepsilon\to0+$ in the strong operator topology and that this topology is not metrizable because it does not satisfy the first separability axiom (see, e.g., \cite[Chapter~VI, Section~1]{ReedSimon80}).
\end{remark}

\section{Proofs of the main results}\label{sec3}

\begin{proof}[Proof of Theorem $\ref{main_th}$]
The sufficiency of the system of Condition (0) and Limit Condition (I) and (II) for the problem \eqref{bound_pr_eps_1}, \eqref{bound_pr_eps_2} to satisfy Definition~\ref{defin_2} is proved in \cite[Theorem~1.1]{KodlyukMikhailets2013JMS} for $r=1$ and in \cite[Theorem~3]{GnypKodlyukMikhailets2015UMJ} for $r\geq2$.  Thus, we should establish necessity only.

Assume that this problem satisfies Definition~\ref{defin_2}. Then, of course, Condition (0) is fulfilled. It remains to prove that the problem satisfies both Limit Condition (I) and~(II). We divide our reasoning into three steps.

\emph{Step $1$.} Here, we will prove that the boundary-value problem  \eqref{bound_pr_eps_1}, \eqref{bound_pr_eps_2} satisfies Limit Condition~(I).

If $r\geq2$, we will previously reduce the problem \eqref{bound_pr_eps_1}, \eqref{bound_pr_eps_2} to a boundary-value problem for system of first-order differential equations. Let the parameter $\varepsilon\in[0,\varepsilon_{0})$. As usual, we put
\begin{gather}\label{function-x}
x(\cdot,\varepsilon):=
\mathrm{col}(y(\cdot,\varepsilon),y'(\cdot, \varepsilon),\ldots,y^{(r-1)}(\cdot,\varepsilon))\in(W_p^{n+1})^{rm},\\
\widetilde{f}(\cdot,\varepsilon):=
\mathrm{col}\bigl(0,\ldots,0,f(\cdot,\varepsilon))
\in(W_p^{n})^{rm},\notag
\end{gather}
and
\begin{equation*}
\widetilde{A}(\cdot,\varepsilon):=\left(
\begin{array}{ccccc}
O_m & I_m & O_m & \ldots & O_m \\
O_m & O_m & I_m & \ldots & O_m \\
\vdots & \vdots & \vdots & \ddots & \vdots \\
O_m & O_m & O_m & \ldots & I_m \\
A_0(\cdot,\varepsilon) & A_1(\cdot,\varepsilon) & A_2(\cdot,\varepsilon) & \ldots & A_{r-1}(\cdot,\varepsilon)\\
\end{array}\right)\in(W_p^{n})^{rm\times rm}.
\end{equation*}
Here, $O_m$ and $I_m$ stand respectively for zero and identity  $(m \times m)$-matrix. Bearing representation \eqref{oper_B_represent} in mind, we also set
\begin{equation}\label{6.Be_fo}
\widetilde{B}(\varepsilon)x:=
\sum\limits_{k=1}^{r-1}\alpha_{k}(\varepsilon)x_{k}(a)+
\sum\limits_{k=r}^{n+r}\alpha_{k}(\varepsilon)x_{r}^{(k-r)}(a)
+\int\limits_{a}^{b}\Phi(t,\varepsilon)x_{r}^{(n+1)}(t)dt
\end{equation}
for every vector-valued function $x=\mathrm{col}(x_{1},\ldots,x_{r})$ with $x_{1},\ldots,x_{r}\in(W_p^{n+1})^{m}$. The linear mapping $x\mapsto\widetilde{B}(\varepsilon)x$ acts continuously from $(W_p^{n+1})^{rm}$ to $\mathbb{C}^{rm}$. Let us consider the boundary-value problem
\begin{gather}\label{6.fo}
x'(t,\varepsilon)+\widetilde{A}(t,\varepsilon)x(t,\varepsilon)=
\widetilde{f}(t,\varepsilon),\quad a\leq t\leq b,\\
\widetilde{B}(\varepsilon)x(\cdot,\varepsilon)=c(\varepsilon). \label{6.BC_fo}
\end{gather}
It is evident that a function $y(\cdot,\varepsilon)\in(W^{n+r}_p)^m$ is a solution to the problem \eqref{bound_pr_eps_1}, \eqref{bound_pr_eps_2} if and only if the function \eqref{function-x} is a solution to the problem \eqref{6.fo}, \eqref{6.BC_fo}. In the $r=1$ case, we put $x(\cdot,\varepsilon):=y(\cdot,\varepsilon)$, $\widetilde{f}(\cdot,\varepsilon):=f(\cdot,\varepsilon)$,  $\widetilde{A}(\cdot,\varepsilon):=A_{0}(\cdot,\varepsilon)$, and $\widetilde{B}(\varepsilon):=B(\varepsilon)$ for the sake of uniformity in notation on Step~1, then the problem \eqref{bound_pr_eps_1}, \eqref{bound_pr_eps_2} coincides with the problem \eqref{6.fo}, \eqref{6.BC_fo}.

Limit Condition~(I) is equivalent to the convergence  $\widetilde{A}(\cdot,\varepsilon)\to\widetilde{A}(\cdot,0)$ in the space $(W_p^{n})^{rm\times rm}$ as $\varepsilon\to0+$. Let us prove this convergence.

To this end we note the following: if $\widetilde{f}(\cdot,\varepsilon)$ and $c(\varepsilon)$ do not depend on $\varepsilon\in[0,\varepsilon_{1})$, then $y(\cdot,\varepsilon)\to y(\cdot,0)$ in $(W^{n+r}_p)^{m}$ as $\varepsilon\to0+$ by condition~($\ast\ast$) of Definition~\ref{defin_2}. The latter convergence is equivalent to that $x(\cdot,\varepsilon)\to x(\cdot,0)$ in $(W^{n+1}_p)^{rm}$ as $\varepsilon\to0+$.

Given $\varepsilon\in[0,\varepsilon_{1})$, we consider the matrix boundary-value problem
\begin{gather}\label{matrix-eq}
X'(t,\varepsilon)+\widetilde{A}(t,\varepsilon)X(t,\varepsilon)=O_{rm},
\quad a\leq t\leq b,\\
[\widetilde{B}(\varepsilon)X(\cdot,\varepsilon)]=I_{rm}.
\label{matrix-bound-cond}
\end{gather}
Here, $X(\cdot,\varepsilon):=(x_{j,k}(\cdot,\varepsilon))_{j,k=1}^{rm}$ is an unknown matrix-valued function from the space $(W^{n+1}_p)^{rm\times rm}$, and
$$
[\widetilde{B}(\varepsilon)X(\cdot,\varepsilon)]:=
\left(\widetilde{B}(\varepsilon)
\begin{pmatrix}
x_{1,1}(\cdot,\varepsilon)\\
\vdots \\
x_{rm,1}(\cdot,\varepsilon)\\
\end{pmatrix}
\;\ldots\;
\widetilde{B}(\varepsilon)\begin{pmatrix}
x_{1,rm}(\cdot,\varepsilon)\\
\vdots \\
x_{rm,rm}(\cdot,\varepsilon)\\
\end{pmatrix}\right).
$$

The problem \eqref{matrix-eq}, \eqref{matrix-bound-cond} is a union of $rm$ boundary-value problems \eqref{6.fo}, \eqref{6.BC_fo} whose right-hand sides do not depend on~$\varepsilon$. Therefore it follows directly from our assumption that this problem has a unique solution $X(\cdot,\varepsilon)\in(W^{n+1}_p)^{rm\times rm}$, and, moreover, $X(\cdot,\varepsilon)\to X(\cdot,0)$ in the space $(W^{n+1}_p)^{rm\times rm}$ as $\varepsilon\to0+$. Note that $\det X(t,\varepsilon)\neq0$ for every $t\in[a,b]$; otherwise the function columns of $X(\cdot,\varepsilon)$ would be linear dependent, contrary to~\eqref{matrix-bound-cond}. Since $(W^{n+1}_p)^{rm\times rm}$ is a Banach algebra, the latter convergence implies that $(X(\cdot,\varepsilon))^{-1}\to(X(\cdot,0))^{-1}$ in $(W^{n+1}_p)^{rm\times rm}$ as $\varepsilon\to0+$. Besides, $X'(\cdot,\varepsilon)\to X'(\cdot,0)$ in $(W^{n}_p)^{rm\times rm}$ as $\varepsilon\to0+$. Hence, owing to \eqref{matrix-eq}, we obtain the convergence
\begin{equation*}
\widetilde{A}(\cdot,\varepsilon)=
-X'(\cdot,\varepsilon)(X(\cdot,\varepsilon))^{-1} \rightarrow-X'(\cdot,0)(X(\cdot,0))^{-1}=\widetilde{A}(\cdot,0)
\end{equation*}
in the space $(W^{n}_p)^{rm\times rm}$ as $\varepsilon\to0+$. Here, we use the fact that $(W^{n}_p)^{rm\times rm}$ is a Banach algebra if $n\geq1$, and, besides, that $(X(\cdot,\varepsilon))^{-1}\to(X(\cdot,0))^{-1}$
in $C([a,b],\mathbb{C}^{rm\times rm})$ when we reason in the $n=0$ case. Thus, the problem  \eqref{bound_pr_eps_1}, \eqref{bound_pr_eps_2} satisfies Limit Condition~(I). Specifically,
\begin{equation}\label{bound-norm-A}
\|A_{r-j}(\varepsilon)\|_{n,p}=O(1)\quad\mbox{as}\quad\varepsilon\to0+
\quad\mbox{for each}\quad j\in\{1,\ldots,r\}.
\end{equation}

\emph{Step~$2$.} Let us prove that
\begin{equation}\label{bound-norm-B}
\|B(\varepsilon)\|=O(1)\quad\mbox{as}\quad\varepsilon\to0+;
\end{equation}
here, $\|B(\varepsilon)\|$ denotes the norm of the continuous operator  \eqref{oper_B_class}. Suppose the contrary, i.e. there exists a number sequence $(\varepsilon^{(k)})_{k=1}^{\infty}\subset(0,\varepsilon_{1})$ such that $\varepsilon^{(k)}\to0$ and
\begin{equation}\label{norm-B-infty}
0<\|B(\varepsilon^{(k)})\|\to\infty\quad\mbox{as}\quad k\to\infty.
\end{equation}
For every integer $k\geq1$, we choose a function $w_{k}\in(W^{n+r}_p)^{m}$ such that
\begin{equation}\label{norm-B-1/2}
\|w_{k}\|_{n+r,p}=1\quad\mbox{and}\quad \|B(\varepsilon^{(k)})w_{k}\|_{\mathbb{C}^{rm}}\geq
\frac{1}{2}\,\|B(\varepsilon^{(k)})\|.
\end{equation}
We let
$$
y(\cdot,\varepsilon^{(k)}):=\|B(\varepsilon^{(k)})\|^{-1}\,w_{k},\quad
f(\cdot,\varepsilon^{(k)}):=
L(\varepsilon^{(k)})\,y(\cdot,\varepsilon^{(k)}),
$$
and
$$
c(\varepsilon^{(k)}):=B(\varepsilon^{(k)})\,y(\cdot,\varepsilon^{(k)}).
$$
It follows from \eqref{norm-B-infty} and \eqref{norm-B-1/2} that
\begin{equation}\label{convergence-y}
y(\cdot,\varepsilon^{(k)})\to0\quad\mbox{in}\quad(W^{n+r}_p)^{m}
\quad\mbox{as}\quad k\to\infty.
\end{equation}
Hence,
\begin{equation}\label{convergence-f}
f(\cdot,\varepsilon^{(k)})\to0\quad\mbox{in}\quad(W^{n}_p)^{m}
\quad\mbox{as}\quad k\to\infty
\end{equation}
because the problem \eqref{bound_pr_eps_1}, \eqref{bound_pr_eps_2} satisfies Limit Condition~(I) according to Step~1. Besides, it follows directly from \eqref{norm-B-1/2} that $1/2\leq\|c(\varepsilon^{(k)})\|_{\mathbb{C}^{rm}}\leq1$. Therefore, passing to a subsequence of $(\varepsilon^{(k)})_{k=1}^{\infty}$, we can assume that
\begin{equation}\label{convergence-c}
c(\varepsilon^{(k)})\to c(0)\quad\mbox{in}\quad\mathbb{C}^{rm}\quad
\mbox{as}\quad k\to\infty\quad\mbox{for some}\quad c(0)\neq0.
\end{equation}

Recall that, for every integer $k\geq1$, the vector-valued function $y(\cdot,\varepsilon^{(k)})\in(W^{n+r}_p)^{m}$ is a unique solution to the boundary-value problem
\begin{gather*}
L(\varepsilon^{(k)})y(t,\varepsilon^{(k)})=f(t,\varepsilon^{(k)}),  \quad a\leq t\leq b,\\
B(\varepsilon^{(k)})y(\cdot,\varepsilon^{(k)})=c(\varepsilon^{(k)}).
\end{gather*}
Since this problem satisfies condition $(\ast\ast)$ of Definition~\ref{defin_2}, it follows from \eqref{convergence-f} and
\eqref{convergence-c} that the function $y(\cdot,\varepsilon^{(k)})$ converges in $(W^{n+r}_p)^{m}$ to the unique solution $y(\cdot,0)$ of the boundary-value problem
\begin{gather}\notag
L(0)y(t,0)=0,\quad a\leq t\leq b,\\
B(0)y(\cdot,0)=c(0).\label{bound-cond-0}
\end{gather}
But $y(\cdot,0)=0$ by \eqref{convergence-y}, which contradicts \eqref{bound-cond-0} in view of $c(0)\neq0$. Hence, the assumption made at the beginning of Step~2 is wrong; so, we have proved \eqref{bound-norm-B}.

\emph{Step $3$.} Let us now prove that the boundary-value problem  \eqref{bound_pr_eps_1}, \eqref{bound_pr_eps_2} satisfies Limit Condition~(II). According to \eqref{bound-norm-A} and
\eqref{bound-norm-B}, there exist numbers $\gamma'>0$ and  $\varepsilon'\in(0,\varepsilon_{1})$ such that
\begin{equation}\label{bound-norm-LB}
\|(L(\varepsilon),B(\varepsilon))\|\leq\gamma'
\quad\mbox{for every}\quad\varepsilon\in[0,\varepsilon').
\end{equation}
Here, $\|(L(\varepsilon),B(\varepsilon))\|$ denotes the norm of the continuous operator \eqref{6.LBe}. We arbitrarily choose a function
$y\in(W^{n+r}_p)^{m}$ and put $f(\cdot,\varepsilon):=L(\varepsilon)y$ and $c(\varepsilon):=B(\varepsilon)y$ for every   $\varepsilon\in[0,\varepsilon_{0})$. Then,
\begin{equation}\label{y-L(e)B(e)}
y=(L(\varepsilon),B(\varepsilon))^{-1}
(f(\cdot,\varepsilon),c(\varepsilon))\quad\mbox{for every}\quad
\varepsilon\in[0,\varepsilon').
\end{equation}
Here, of course, $(L(\varepsilon),B(\varepsilon))^{-1}$ denotes the inverse operator to \eqref{6.LBe}. (Recall that the operator \eqref{6.LBe} is invertible by condition ($\ast$) of Definition~\eqref{defin_2}.) Using \eqref{bound-norm-LB} and \eqref{y-L(e)B(e)}, we obtain the following relations as $\varepsilon\to0+$:
\begin{align*}
&\bigl\|B(\varepsilon)y-B(0)y\bigr\|_{\mathbb{C}^{rm}}\leq
\bigl\|(f(\cdot,\varepsilon),c(\varepsilon))-
(f(\cdot,0),c(0))\bigr\|_{(W^{n}_p)^{m}\times\mathbb{C}^{rm}}\\
&\leq\gamma'\,\bigl\|(L(\varepsilon),B(\varepsilon))^{-1}
\bigl((f(\cdot,\varepsilon),c(\varepsilon))-
(f(\cdot,0),c(0))\bigr)\bigr\|_{n+r,p}\\
&=\gamma'\,\bigl\|(L(0),B(0))^{-1}(f(\cdot,0),c(0))-
(L(\varepsilon),B(\varepsilon))^{-1}(f(\cdot,0),c(0))\bigr\|_{n+r,p}
\to0.
\end{align*}
The latter convergence is due to condition $(\ast\ast)$ of Definition~\ref{defin_2}. Thus, since the function $y\in(W^{n+r}_p)^{m}$ is arbitrary, we have proved that Limit Condition~(II) is satisfied.
\end{proof}

\begin{proof}[Proof of Theorem $\ref{4.th-bound_2}$]
Let us first prove the left-hand side of~\eqref{6.bound}. Limit Conditions (I) and (II) imply the strong convergence $(L(\varepsilon),B(\varepsilon))\to(L(0),B(0))$ as $\varepsilon\to0+$ of the continuous operators from $(W^{n+r}_p)^{m}$ to $(W^{n}_p)^{m}\times\mathbb{C}^{rm}$. Hence, there exist numbers $\gamma'>0$ and $\varepsilon'\in(0,\varepsilon_{0})$ that the norm of the operator $(L(\varepsilon),B(\varepsilon))$ satisfies condition \eqref{bound-norm-LB}. Indeed, if this condition were not fulfilled, there would exist a sequence of positive numbers $(\varepsilon^{(k)})_{k=1}^{\infty}$ such that $\varepsilon^{(k)}\to0$ and $\|(L(\varepsilon^{(k)}),B(\varepsilon^{(k)})\|\to\infty$ as $k\to\infty$, which would contradict the above-mentioned strong convergence in view of Banach-Steinhaus Theorem. Now, owing to
\eqref{bound-norm-LB}, we conclude that
\begin{align*}
d_{n,p}(\varepsilon)&=
\|(L(\varepsilon),B(\varepsilon))(y(\cdot,0)-y(\cdot,\varepsilon))\|_
{(W^{n}_p)^{m}\times\mathbb{C}^{rm}}\\
&\leq\gamma'\,\|y(\cdot,0)-y(\cdot,\varepsilon)\|_{n+r,p}
\end{align*}
for every $\varepsilon\in[0,\varepsilon')$. Thus, we obtain the left-hand side of the two-sided estimate \eqref{6.bound} with $\gamma_{1}:=1/\gamma'$.

Let us prove the right-hand side of this estimate. The boundary-value problem \eqref{bound_pr_eps_1}, \eqref{bound_pr_eps_2} satisfies Definition~\ref{defin_2} by Theorem~\ref{main_th}. Therefore the operator \eqref{6.LBe} is invertible for every $\varepsilon\in[0,\varepsilon_1)$, and, furthermore, its inverse $(L(\varepsilon),B(\varepsilon))^{-1}$ converges strongly to $(L(0),B(0))^{-1}$ as $\varepsilon\to0+$. Indeed, for arbitrary $f\in(W_p^{n})^{m}$ and $c\in\mathbb{C}^{rm}$, it follows from condition~$(\ast\ast)$ of Definition~\ref{defin_2} that
\begin{equation*}
(L(\varepsilon),B(\varepsilon))^{-1}(f,c)=:y(\cdot,\varepsilon)\to
y(\cdot,0):=(L(0),B(0))^{-1}(f,c)
\end{equation*}
in $(W_p^{n+r})^{m}$ as $\varepsilon\to0+$. Hence, there exist positive numbers $\varepsilon_{2}<\min\{\varepsilon_1,\varepsilon'\}$ and $\gamma_{2}$ such that
\begin{equation}\label{norm-inverse}
\|(L(\varepsilon),B(\varepsilon))^{-1}\|\leq\gamma_{2}\quad
\mbox{for every}\quad\varepsilon\in[0,\varepsilon_{2}).
\end{equation}
Here, of course, $\|(L(\varepsilon),B(\varepsilon))^{-1}\|$ denotes the norm of the inverse operator to \eqref{6.LBe}. Property \eqref{norm-inverse} is deduced from the above strong convergence and Banach-Steinhaus Theorem in the same way as that in the previous paragraph. Now, owing to this property, we conclude that
\begin{align*}
\|y(\cdot,0)-y(\cdot,\varepsilon)\|_{n+r,p}&=
\|(L(\varepsilon),B(\varepsilon))^{-1}(L(\varepsilon),B(\varepsilon))
(y(\cdot,0)-y(\cdot,\varepsilon))\|_{n+r,p}\\
&\leq\gamma_{2}\,\|(L(\varepsilon),B(\varepsilon))
(y(\cdot,0)-y(\cdot,\varepsilon))\|_{(W^{n}_p)^{m}\times\mathbb{C}^{rm}}\\
&=\gamma_{2}\,d_{n,p}(\varepsilon)
\end{align*}
for every $\varepsilon\in[0,\varepsilon_{2})$. Thus, we get the right-hand side of the estimate \eqref{6.bound}.
\end{proof}

\section{Application to multipoint boundary-value problems}\label{sec4}

We arbitrarily choose $\varkappa\geq1$ distinct points $t_{1},\ldots,t_{\varkappa}\in[a,b]$ and consider the following multipoint boundary-value problem:
\begin{gather}\label{bound_pr_1}
Ly(t)\equiv y^{(r)}(t)+ \sum\limits_{j=1}^{r}A_{r-j}(t)y^{(r-j)}(t)=
f(t),\quad a\leq t\leq b,\\
By\equiv\sum_{l=0}^{n+r-1}\sum_{i=1}^{\varkappa}
\alpha_{i}^{(l)}y^{(l)}(t_i)=c. \label{bagat_cond}
\end{gather}
Here, the unknown vector-valued function $y$ belongs to  $(W^{n+r}_p)^m$, whereas each matrix-valued function $A_{r-j}\in(W_p^n)^{m\times m}$, the vector-valued function $f\in(W^n_p)^m$, each number matrix $\alpha_{i}^{(l)}\in\mathbb{C}^{rm\times m}$, and the vector $c\in\mathbb{C}^{rm}$ are arbitrarily chosen.

Owing to the continuous embedding
\begin{equation}\label{W-C-embedding}
(W^{n+r}_{p})^m\hookrightarrow(C^{n+r-1})^m,
\end{equation}
the boundary condition \eqref{bagat_cond} is well posed, and the mapping $y\to By$, with $y\in(W^{n+r}_p)^m$, sets the continuous linear operator $B:(W^{n+r}_p)^m\to \mathbb{C}^{rm}$. Thus, the boundary-value problem \eqref{bound_pr_1}, \eqref{bagat_cond} is generic with respect to the Sobolev space $W^{n+r}_p$. Note that the boundary condition \eqref{bagat_cond} is not classical because it contains the derivatives  $y^{(l)}$ of order $l\geq r$ if $n\geq1$.

We consider \eqref{bound_pr_1}, \eqref{bagat_cond} as a limiting problem as $\varepsilon\to0+$ for the following multipoint boundary-value problem depending on the parameter $\varepsilon\in(0,\varepsilon_0)$:
\begin{gather}\label{bound_pr_2}
L(\varepsilon)y(t,\varepsilon)\equiv y^{(r)}(t,\varepsilon)+ \sum\limits_{j=1}^{r}A_{r-j}(t,\varepsilon)y^{(r-j)}(t,\varepsilon)=
f(t,\varepsilon),\quad a\leq t\leq b,\\
B(\varepsilon)y(\cdot,\varepsilon)\equiv\sum_{l=0}^{n+r-1} \sum_{i=0}^{\varkappa} \sum_{j=1}^{k_i} \alpha_{i,j}^{(l)}(\varepsilon) y^{(l)}(t_{i,j}(\varepsilon),\varepsilon)=c(\varepsilon). \label{bagat_cond_2}
\end{gather}
Here, for every $\varepsilon\in(0,\varepsilon_0)$, the unknown vector-valued function $y(\cdot,\varepsilon)$ belongs to  $(W^{n+r}_p)^m$, whereas each matrix-valued function $A_{r-j}(\cdot,\varepsilon)\in(W_p^n)^{m\times m}$, the vector-valued function $f(\cdot,\varepsilon)\in(W^n_p)^m$, each number matrix $\alpha_{i,j}^{(l)}(\varepsilon)\in\mathbb{C}^{rm\times m}$, every point $t_{i,j}(\varepsilon)\in[a,b]$, and the vector $c(\varepsilon)\in\mathbb{C}^{rm}$ are arbitrarily chosen. The positive integers $k_0,k_1,\ldots,k_\varkappa$ do not depend on $\varepsilon$.

Note that, unlike \eqref{bagat_cond}, the boundary condition \eqref{bagat_cond_2} is posed for the points $t_{i,j}(\varepsilon)$ united in $\varkappa+1$ sets $\{t_{i,1}(\varepsilon),\ldots,t_{i,k_i}(\varepsilon)\}$, with $i=0,1,\ldots,\varkappa$. This is caused by our further assumption on the behaviour of these points as $\varepsilon\to0+$. Namely, we will assume that $t_{i,j}(\varepsilon)\to t_{i}$ whenever $i\geq1$, whereas no assumption on convergence of the points $t_{0,j}(\varepsilon)$ will be made. Note also that the coefficients $A_{r-j}(t,\varepsilon)$ and  $\alpha_{i,j}^{(l)}(\varepsilon)$ and the points $t_{i,j}(\varepsilon)$ are not supposed to have any regularity with respect to $\varepsilon$.

Like \eqref{bound_pr_1}, \eqref{bagat_cond}, the boundary-value problem \eqref{bound_pr_2}, \eqref{bagat_cond_2} is generic with respect to $W^{n+r}_p$ for every $\varepsilon\in(0,\varepsilon_0)$. Thus, the matter of Section~\ref{sec2} is applicable to this problem provided that  we put $L(0):=L$ and $B(0):=B$. The main result of this section, Theorem~\ref{main_th}, gives necessary and sufficient conditions for the solution $y(\cdot,\varepsilon)$ to depend continuously on the parameter $\varepsilon$ at $\varepsilon=0$. Among them is Limit Condition~(II), which means the strong convergence $B(\varepsilon)\to B$ as $\varepsilon\to0+$ of continuous operators from $(W^{n+r}_p)^m$ to $\mathbb{C}^{rm}$. We will give explicit sufficient conditions under which this convergence holds true.

Note that it is scarcely possible to use the system of conditions (2a)--(2c) from Remark~\ref{rem2.4} for the verification of this strong convergence because it is difficult in practice to find the matrix-function $\Phi(\cdot,\varepsilon)$ in the canonical  representation~\eqref{oper_B_represent} of the operator $B(\varepsilon)$ corresponding to the multipoint boundary condition~\eqref{bagat_cond_2}.

\begin{theorem}\label{th4.1}
Suppose that the left-hand side of \eqref{bagat_cond_2} satisfies the following conditions as $\varepsilon \to 0+$:
\begin{itemize}
    \item[\textup{(d1)}] $t_{i,j}(\varepsilon)\to t_{i}$ for all $i\in\{1,\ldots,\varkappa\}$ and $j\in\{1,\ldots,k_i\}$; \smallskip
    \item[\textup{(d2)}] $\sum\limits_{j=1}^{k_i}
        \alpha_{i,j}^{(l)}(\varepsilon)\to\alpha_{i}^{(l)}$ for all $i\in\{1,\ldots,\varkappa\}$ and $l\in\{0,\ldots,n+r-1\}$; \smallskip
    \item[\textup{(d3)}] $\|\alpha_{i,j}^{(n+r-1)}(\varepsilon)\|\!\cdot\!
|t_{i,j}(\varepsilon)-t_{i}|^{1/q}=O(1)$
for all $i\in\{1,\ldots,\varkappa\}$ and $j\in\{1,\ldots,k_i\}$, \smallskip
    \item[\textup{(d4)}] $\|\alpha_{i,j}^{(l)}(\varepsilon)\|
        \!\cdot\!|t_{i,j}(\varepsilon)-t_{i}|\to0$ for all $i\in\{1,\ldots,\varkappa\}$, $j\in\{1,\ldots,k_i\}$, and $l\in\mathbb{Z}$ with $0\leq l\leq n+r-2$; \smallskip
    \item[\textup{(d5)}] $\alpha_{0,j}^{(l)}(\varepsilon)\to0$ for all $j\in\{1,\ldots,k_0\}$ and $l\in\{0,\ldots,n+r-1\}$.
\end{itemize}
\noindent Then \eqref{bagat_cond_2} satisfies Limit Condition~\textup{(II)}.
\end{theorem}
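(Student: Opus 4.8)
The plan is to verify Limit Condition~(II) straight from its definition, i.e.\ to prove that $B(\varepsilon)y\to By$ in $\mathbb{C}^{rm}$ as $\varepsilon\to0+$ for each individual $y\in(W_p^{n+r})^m$. So I would fix such a $y$ once and for all and record the regularity supplied by the embedding \eqref{W-C-embedding}: every derivative $y^{(l)}$ with $0\le l\le n+r-1$ is continuous, hence bounded, on $[a,b]$, while the top one $y^{(n+r-1)}$ is even absolutely continuous with $y^{(n+r)}\in L_p$. The discrepancy $B(\varepsilon)y-By$ has three sources --- the \emph{escaping} nodes indexed by $i=0$, the drift of the coefficients, and the drift of the nodes $t_{i,j}(\varepsilon)\to t_i$ for $i\ge1$ --- and I would treat them separately. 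The $i=0$ block $\sum_{l}\sum_{j=1}^{k_0}\alpha_{0,j}^{(l)}(\varepsilon)\,y^{(l)}(t_{0,j}(\varepsilon))$ is the easiest: each $y^{(l)}(t_{0,j}(\varepsilon))$ stays bounded no matter where the points wander, so condition~(d5) alone drives this block to $0$; this is precisely why no convergence of the points $t_{0,j}(\varepsilon)$ is required.

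For the blocks with $i\ge1$ I would, for each fixed pair $(l,i)$, use the routine add-and-subtract identity
\[
\sum_{j=1}^{k_i}\alpha_{i,j}^{(l)}(\varepsilon)\,y^{(l)}(t_{i,j}(\varepsilon))-\alpha_i^{(l)}y^{(l)}(t_i)
=\Bigl(\sum_{j=1}^{k_i}\alpha_{i,j}^{(l)}(\varepsilon)-\alpha_i^{(l)}\Bigr)y^{(l)}(t_i)
+\sum_{j=1}^{k_i}\alpha_{i,j}^{(l)}(\varepsilon)\bigl(y^{(l)}(t_{i,j}(\varepsilon))-y^{(l)}(t_i)\bigr).
\]
The first term on the right tends to $0$ by condition~(d2), since $y^{(l)}(t_i)$ is a fixed vector. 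Everything thus reduces to bounding the node increments $y^{(l)}(t_{i,j}(\varepsilon))-y^{(l)}(t_i)$ against the possibly unbounded weights $\alpha_{i,j}^{(l)}(\varepsilon)$.

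This is the crux, and it is where the two different node conditions (d3) and (d4) come from. For $0\le l\le n+r-2$ the function $y^{(l)}$ belongs to $W_p^{n+r-l}$ with $n+r-l\ge2$, hence lies in $C^1$ and is Lipschitz on $[a,b]$; the increment is then $O(|t_{i,j}(\varepsilon)-t_i|)$, and multiplying by $\|\alpha_{i,j}^{(l)}(\varepsilon)\|$ gives a quantity that vanishes by condition~(d4). For the top order $l=n+r-1$ the Lipschitz bound is no longer available, since $y^{(n+r-1)}$ is merely absolutely continuous; instead I would write the increment as $\int_{t_i}^{t_{i,j}(\varepsilon)}y^{(n+r)}(s)\,ds$ and apply Hölder's inequality to obtain
\[
\bigl|y^{(n+r-1)}(t_{i,j}(\varepsilon))-y^{(n+r-1)}(t_i)\bigr|
\le\Bigl(\int_{I_{i,j}(\varepsilon)}|y^{(n+r)}(s)|^p\,ds\Bigr)^{1/p}\,|t_{i,j}(\varepsilon)-t_i|^{1/q},
\]
where $I_{i,j}(\varepsilon)$ denotes the interval bounded by $t_i$ and $t_{i,j}(\varepsilon)$. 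The factor $|t_{i,j}(\varepsilon)-t_i|^{1/q}$ is exactly the one that pairs with $\|\alpha_{i,j}^{(n+r-1)}(\varepsilon)\|$ to remain $O(1)$ under condition~(d3), while the $L_p$-norm over the shrinking interval $I_{i,j}(\varepsilon)$ tends to $0$ by condition~(d1) together with absolute continuity of the Lebesgue integral of $|y^{(n+r)}|^p\in L_1$; the product is therefore $O(1)\cdot o(1)=o(1)$.

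I expect this top-order term to be the main obstacle, as it is the only place where the relevant derivative fails to be continuous: the splitting $1/p+1/q=1$ built into condition~(d3), in contrast with the full power $|t_{i,j}(\varepsilon)-t_i|$ in condition~(d4), is dictated precisely by this Hölder estimate. Since the index ranges over $l$, $i$, $j$ are all finite, summing the finitely many contributions, each shown to vanish, yields $B(\varepsilon)y\to By$ as $\varepsilon\to0+$. As $y\in(W_p^{n+r})^m$ was arbitrary, this is exactly Limit Condition~(II).
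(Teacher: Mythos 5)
Your proposal is correct, but it follows a genuinely different route from the paper. The paper does not prove the convergence $B(\varepsilon)y\to By$ directly for every $y\in(W_p^{n+r})^m$; instead it invokes the Banach--Steinhaus theorem, reducing the claim to (a) uniform boundedness of the norms $\|B(\varepsilon)\|$ and (b) convergence $B(\varepsilon)y\to By$ only for $y$ in the dense subset $(C^{\infty})^{m}$. The reason the paper needs this detour is precisely the top-order term: for general $y\in(W_p^{n+r})^m$ it controls the increment $y^{(n+r-1)}(t_{i,j}(\varepsilon))-y^{(n+r-1)}(t_i)$ via the embedding $W_p^{n+r}\hookrightarrow C^{n+r-1,1/q}([a,b])$, which combined with (d3) gives only $O(1)\cdot\|y\|_{n+r,p}$ --- enough for boundedness but not for vanishing; vanishing is then obtained only for smooth $y$, where the mean value theorem applies at order $n+r-1$ as well. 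You remove this obstruction by a sharper estimate: writing the top-order increment as $\int_{t_i}^{t_{i,j}(\varepsilon)}y^{(n+r)}(s)\,ds$ and applying H\"older's inequality, so that the factor $|t_{i,j}(\varepsilon)-t_i|^{1/q}$ pairs with $\|\alpha_{i,j}^{(n+r-1)}(\varepsilon)\|$ to stay $O(1)$ by (d3), while the local norm $\bigl(\int_{I_{i,j}(\varepsilon)}|y^{(n+r)}|^p\,ds\bigr)^{1/p}$ tends to zero by (d1) and absolute continuity of the Lebesgue integral of $|y^{(n+r)}|^p\in L_1$. This makes the argument work for \emph{every} $y\in(W_p^{n+r})^m$ at once, so you need neither the density argument, nor Banach--Steinhaus, nor the H\"older-space embedding; your estimate also explains why the exponent $1/q$ in (d3) is exactly the natural one, and it handles $p=1$ uniformly (there $1/q=0$ and the local $L_1$ norm alone supplies the $o(1)$). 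What the paper's scheme buys in exchange is the explicit uniform bound $\|By-B(\varepsilon)y\|\leq c\,\|y\|_{n+r,p}$, i.e.\ $\|B(\varepsilon)\|=O(1)$, as a stated byproduct, and a modular structure (boundedness plus convergence on a dense set) that is standard in this series of papers; but as far as proving Limit Condition~(II) is concerned, your direct argument is complete and somewhat more elementary.
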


The hypothesis of this theorem need some comments. In conditions (d3) and (d4) we let $\|\cdot\|$ denote the norm of a number matrix, this norm being equal to the sum of the absolute values of all entries of the matrix. In condition~(d3), the number $q$ is defined by the formula $1/p+1/q=1$. If $p=1$, then $1/q=0$ and condition~(d3) means that $\|\alpha_{i,j}^{(n+r-1)}(\varepsilon)\|=O(1)$ as $\varepsilon\to0+$.  Conditions (d2) and (d4) admit that the coefficients $\alpha_{i,j}^{(l)}(\varepsilon)$ with $l\leq n+r-2$ may grow infinitely as $\varepsilon\to0+$ but not very rapidly. The same is true for the leading coefficients $\alpha_{i,j}^{(n+r-1)}(\varepsilon)$ in the $p>1$ case due to condition~(d3). Condition (d5) suggests that we need not assume any convergence of the points $t_{0,j}(\varepsilon)$ as $\varepsilon\to0+$, in contrast to condition~(d1).

The following result is a direct consequence of Theorems \ref{main_th} and \ref{th4.1}.

\begin{theorem}\label{th4.2}
Suppose that the multipoint boundary-value problem \eqref{bound_pr_2}, \eqref{bagat_cond_2} satisfies Limit Condition \textup{(I)} and conditions \textup{(d1)}--\textup{(d5)} and that the limiting problem \eqref{bound_pr_1}, \eqref{bagat_cond} with $f(\cdot)\equiv0$ and $c=0$ has only the trivial solution. Then the solution to the problem \eqref{bound_pr_2}, \eqref{bagat_cond_2} depends continuously on the parameter $\varepsilon$ at $\varepsilon=0$.
\end{theorem}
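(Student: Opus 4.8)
The plan is to fix an arbitrary $y\in(W_p^{n+r})^{m}$ and prove directly that $B(\varepsilon)y\to By$ in $\mathbb{C}^{rm}$ as $\varepsilon\to0+$, where $B=B(0)$ is the limiting operator from \eqref{bagat_cond}. By the embedding \eqref{W-C-embedding} each derivative $y^{(l)}$ with $0\le l\le n+r-1$ is continuous and bounded on $[a,b]$; moreover $y^{(l)}\in(C^{1})^{m}$ for $l\le n+r-2$, while $y^{(n+r-1)}\in(W_p^{1})^{m}$. First I would split $B(\varepsilon)y-By$ according to the index~$i$. The block $i=0$ occurs only in $B(\varepsilon)y$; since the values $y^{(l)}(t_{0,j}(\varepsilon))$ remain bounded, condition~(d5) drives this block to~$0$, and no assumption on the points $t_{0,j}(\varepsilon)$ is required.

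For each fixed $i\in\{1,\dots,\varkappa\}$ and $l\in\{0,\dots,n+r-1\}$ the natural step is to write
\begin{equation*}
\sum_{j=1}^{k_i}\alpha_{i,j}^{(l)}(\varepsilon)\,y^{(l)}(t_{i,j}(\varepsilon))
=\Bigl(\sum_{j=1}^{k_i}\alpha_{i,j}^{(l)}(\varepsilon)\Bigr)y^{(l)}(t_i)
+\sum_{j=1}^{k_i}\alpha_{i,j}^{(l)}(\varepsilon)\bigl(y^{(l)}(t_{i,j}(\varepsilon))-y^{(l)}(t_i)\bigr).
\end{equation*}
By~(d2) the first sum tends to $\alpha_{i}^{(l)}y^{(l)}(t_i)$, which is exactly the matching summand of $By$. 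Thus the whole matter reduces to showing that each error term $\alpha_{i,j}^{(l)}(\varepsilon)\bigl(y^{(l)}(t_{i,j}(\varepsilon))-y^{(l)}(t_i)\bigr)$ tends to~$0$.

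For the lower orders $l\le n+r-2$ the derivative $y^{(l)}$ is Lipschitz on $[a,b]$, so the error term is bounded by a constant multiple of $\|\alpha_{i,j}^{(l)}(\varepsilon)\|\,|t_{i,j}(\varepsilon)-t_i|$, which vanishes by~(d4). The crux is the top order $l=n+r-1$, where $y^{(n+r-1)}$ is in general only Hölder continuous. Here I would write $y^{(n+r-1)}(t_{i,j}(\varepsilon))-y^{(n+r-1)}(t_i)=\int_{I_{i,j}(\varepsilon)}y^{(n+r)}$ over the interval $I_{i,j}(\varepsilon)$ with endpoints $t_i$ and $t_{i,j}(\varepsilon)$ and apply Hölder's inequality:
\begin{equation*}
\bigl|y^{(n+r-1)}(t_{i,j}(\varepsilon))-y^{(n+r-1)}(t_i)\bigr|
\le\Bigl(\int_{I_{i,j}(\varepsilon)}|y^{(n+r)}(\tau)|^{p}\,d\tau\Bigr)^{1/p}|t_{i,j}(\varepsilon)-t_i|^{1/q}.
\end{equation*}
By~(d1) the length of $I_{i,j}(\varepsilon)$ tends to~$0$, hence the first factor tends to~$0$ by absolute continuity of the Lebesgue integral (recall $y^{(n+r)}\in(L_p)^{m}$), while the product of $\|\alpha_{i,j}^{(n+r-1)}(\varepsilon)\|$ with the remaining factor $|t_{i,j}(\varepsilon)-t_i|^{1/q}$ is merely $O(1)$ by~(d3). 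The top-order error term is therefore a product of an $o(1)$ and an $O(1)$ factor, so it tends to~$0$; this also explains why (d3) may be a mere boundedness condition at the top order while (d4) must assert genuine vanishing below it.

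The main obstacle is precisely this top-order estimate: the coefficients $\alpha_{i,j}^{(n+r-1)}(\varepsilon)$ are allowed to blow up, and the only thing that tames them is the sharp Hölder inequality, which isolates the local $L_p$-norm of $y^{(n+r)}$ over a shrinking interval and lets absolute continuity of the integral supply the decisive $o(1)$ factor. Once this is in place, the remaining arguments are routine bookkeeping over the finitely many indices $i$, $j$, and~$l$, and summing the finitely many vanishing contributions yields $B(\varepsilon)y\to By$, i.e.\ Limit Condition~(II).
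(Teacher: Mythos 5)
Your proposal is correct, but the heart of it — the verification of Limit Condition~(II) — follows a genuinely different route from the paper's. The paper treats Theorem~\ref{th4.2} as an immediate corollary: Theorem~\ref{th4.1} shows that (d1)--(d5) imply Limit Condition~(II), and then the sufficiency part of Theorem~\ref{main_th}, with the assumed Limit Condition~(I) and Condition~(0), yields the continuous dependence. The paper's proof of Theorem~\ref{th4.1} runs through the Banach--Steinhaus theorem: it first establishes the uniform bound $\|B(\varepsilon)\|=O(1)$ as $\varepsilon\to0+$, handling the top-order increment via the embedding of $W^{n+r}_p$ into the H\"older space $C^{n+r-1,1/q}([a,b])$, where (d3) gives only $O(1)$; it then checks the convergence $B(\varepsilon)y\to By$ only for $y$ in the dense set $(C^{\infty})^{m}$, where every derivative is Lipschitz, so that (d1) and (d3) supply the missing $o(1)$ at the top order. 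You dispense with both the uniform-boundedness step and the density argument by proving the convergence directly for every $y\in(W^{n+r}_p)^m$: your localized H\"older estimate
\begin{equation*}
\bigl\|y^{(n+r-1)}(t_{i,j}(\varepsilon))-y^{(n+r-1)}(t_i)\bigr\|
\le\Bigl(\,\int_{I_{i,j}(\varepsilon)}\|y^{(n+r)}(\tau)\|^{p}\,d\tau\Bigr)^{1/p}
|t_{i,j}(\varepsilon)-t_i|^{1/q}
\end{equation*}
replaces the global H\"older seminorm used in the paper by the $L_p$-norm of $y^{(n+r)}$ over the shrinking interval $I_{i,j}(\varepsilon)$, which tends to zero by absolute continuity of the Lebesgue integral together with (d1), while (d3) keeps the coefficient factor bounded; this converts the paper's $O(1)$ bound into $o(1)$ for arbitrary $y$, and it also covers $p=1$, where $1/q=0$. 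Your argument is thus more elementary and self-contained (no appeal to Banach--Steinhaus or to density of smooth functions); what the paper's route buys in exchange is the explicit uniform bound $\|B(\varepsilon)\|=O(1)$, which is of independent interest, and a limit verification reduced to smooth functions where all increments are handled uniformly by the mean value theorem. One formal point: your write-up stops at Limit Condition~(II); to actually conclude Theorem~\ref{th4.2} you should state the final (shared) step explicitly, namely apply Theorem~\ref{main_th} with Limit Conditions~(I), (II) and Condition~(0), the latter being exactly the assumption that the limiting homogeneous problem \eqref{bound_pr_1}, \eqref{bagat_cond} has only the trivial solution.
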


Note that the system of conditions (d1)--(d5) does not imply the uniform convergence $B(\varepsilon)\to B(0)$ as $\varepsilon\to0+$ of continuous operators from $(W^{n+r}_p)^m$ to $\mathbb{C}^{rm}$. Therefore the conclusion of Theorem~\ref{th4.2} does not follow from the Banach theorem on inverse operator.

\begin{proof}[Proof of Theorem $\ref{th4.1}$]
In view of Banach--Steinhaus Theorem, it is sufficient to prove that the norm of the operator $B(\varepsilon):(W^{n+r}_p)^m\rightarrow\mathbb{C}^{rm}$ is bounded as $\varepsilon\to0+$ and that $B(\varepsilon)y\to B(0)y$ in $\mathbb{C}^{rm}$ as $\varepsilon\to0+$ for every vector-valued function $y$ from the dense set
$$
(C^{\infty})^{m}:=C^{\infty}([a,b],\mathbb{C}^{m})
$$
in the space $(W^{n+r}_p)^m$.

Let us first prove the boundedness of the norm of $B(\varepsilon)$ as $\varepsilon\to0+$. We arbitrarily choose a vector-valued function $y\in(W^{n+r}_p)^m$ and a sufficiently small number $\varepsilon>0$. Owing to \eqref{bagat_cond} and \eqref{bagat_cond_2}, we have the inequality
\begin{equation}\label{6.mp.eq1}
\begin{aligned}
\|By-B(\varepsilon)y\|\leq &\sum\limits_{l=0}^{n+r-1} \sum\limits_{j=1}^{k_0}
\|\alpha_{0,j}^{(l)}(\varepsilon)\|\!\cdot\!
\|y^{(l)}(t_{0,j}(\varepsilon))\|\\
&\;\;\,+\sum\limits_{l=0}^{n+r-1}\sum\limits_{i=1}^{\varkappa}\,
\biggl\|\alpha_{i}^{(l)}y^{(l)}(t_{i})-
\sum\limits_{j=1}^{k_i}\alpha_{i,j}^{(l)}(\varepsilon)
y^{(l)}(t_{i,j}(\varepsilon))\biggr\|.
\end{aligned}
\end{equation}
Here, making use of the continuous embedding \eqref{W-C-embedding}, we can write
\begin{equation}\label{6.mp.eq22}
\begin{gathered}
\|\alpha_{0,j}^{(l)}(\varepsilon)\|\!\cdot\!
\|y^{(l)}(t_{0,j}(\varepsilon))\|\leq
c_0\|\alpha_{0,j}^{(l)}(\varepsilon)\|\!\cdot\!\|y\|_{n+r,p}
\end{gathered}
\end{equation}
for all $l\in\{0,\ldots,n+r-1\}$ and $j\in\{1,\ldots,k_0\}$, with $c_0$ denoting the norm of the embedding operator \eqref{W-C-embedding}.

Besides, given $l\in\{0,\ldots,n+r-1\}$ and $i\in\{1,\ldots,\varkappa\}$, we obtain the inequalities
\begin{equation}\label{6.mp.eq23}
\begin{aligned}
&\biggl\|\alpha_{i}^{(l)}y^{(l)}(t_{i})-
\sum\limits_{j=1}^{k_i}\alpha_{i,j}^{(l)}(\varepsilon)
y^{(l)}(t_{i,j}(\varepsilon))\biggr\|\\
&\leq\biggl\|\biggl(\alpha_{i}^{(l)}-\sum\limits_{j=1}^{k_i}
\alpha_{i,j}^{(l)}(\varepsilon)\biggl)\,y^{(l)}(t_{i})\biggr\|+
\biggl\|\,\sum\limits_{j=1}^{k_i}\alpha_{i,j}^{(l)}(\varepsilon)
\bigl(y^{(l)}(t_{i})-y^{(l)}(t_{i,j}(\varepsilon))\bigr)\biggr\|\\
&\leq c_0\biggl\|\alpha_{i}^{(l)}-\sum\limits_{j=1}^{k_i}
\alpha_{i,j}^{(l)}(\varepsilon)\biggr\|\!\cdot\!\|y\|_{n+r,p}+
\sum\limits_{j=1}^{k_i}
\|\alpha_{i,j}^{(l)}(\varepsilon)\|\!\cdot\!
\|y^{(l)}(t_{i,j}(\varepsilon))-y^{(l)}(t_{i})\|.
\end{aligned}
\end{equation}
Here, for $l=n+r-1$ and each $j\in\{1,\ldots,k_i\}$, we have the inequality
\begin{equation}\label{6.mp.eq2}
\begin{aligned}
&\|\alpha_{i,j}^{(n+r-1)}(\varepsilon)\|\!\cdot\!
\|y^{(n+r-1)}(t_{i,j}(\varepsilon))-y^{(n+r-1)}(t_{i})\|\\
&\leq
\|\alpha_{i,j}^{(n+r-1)}(\varepsilon)\|\,c_{1}\,\|y\|_{n+r,p}\,
|t_{i,j}(\varepsilon)-t_{i}|^{1/q},
\end{aligned}
\end{equation}
with $c_{1}$ being the norm of the continuous operator of the embedding of the Sobolev space $W^{n+r}_p$ in the complex H\"older space $C^{n+r-1,1/q}([a,b])$; see, e.g., \cite[Theorem~4.6.1(e)]{Triebel95}.
If $1/q=0$, then the latter space becomes $C^{n+r-1}$ and inequality \eqref{6.mp.eq2} holds true with $c_1:=2c_0$, of course. Besides, for each $l\in\mathbb{Z}$ with $0\leq l\leq n+r-2$, we conclude by the Lagrange theorem of the mean that
\begin{equation}\label{bound4.10}
\begin{aligned}
\|\alpha_{i,j}^{(l)}(\varepsilon)\|\!\cdot\!
\|y^{(l)}(t_{i,j}(\varepsilon))-y^{(l)}(t_{i})\|&\leq
\|\alpha_{i,j}^{(l)}(\varepsilon)\|\max_{a\leq t\leq b}\|y^{(l+1)}(t)\|
\!\cdot\!|t_{i,j}(\varepsilon)-t_{i}|\\
&\leq\|\alpha_{i,j}^{(l)}(\varepsilon)\|\,c_0\|y\|_{n+r,p}
|t_{i,j}(\varepsilon)-t_{i}|.
\end{aligned}
\end {equation}

Now it follows directly from the inequalities \eqref{6.mp.eq1}--\eqref{bound4.10} and conditions (d2)--(d5) that
$$
\|By-B(\varepsilon)y\|\leq c\,\|y\|_{n+r,p}
$$
for some number $c>0$ that does not depend on $y\in(W^{n+r}_p)^m$ and  sufficiently small $\varepsilon>0$. Hence, the norm of the operator $B(\varepsilon)$ is bounded as  $\varepsilon\to0+$.

Besides,
\begin{equation}\label{f4.11}
\|\alpha_{0,j}^{(l)}(\varepsilon)\|\!\cdot\!
\|y^{(l)}(t_{0,j}(\varepsilon))\|\to0\quad\mbox{as}\quad\varepsilon\to0+
\end{equation}
due to inequality \eqref{6.mp.eq22} and condition~(d5), and
\begin{equation}\label{f4.12}
c_0\biggl\|\alpha_{i}^{(l)}-\sum\limits_{j=1}^{k_i}
\alpha_{i,j}^{(l)}(\varepsilon)\biggr\|\!\cdot\!\|y\|_{n+r,p}\to0 \quad\mbox{as}\quad\varepsilon\to0+
\end{equation}
due to condition~(d2). Note that if $y\in(C^{\infty})^{m}$, then
\begin{equation}\label{f4.13}
\begin{aligned}
&\|\alpha_{i,j}^{(l)}(\varepsilon)\|\!\cdot\!
\|y^{(l)}(t_{i,j}(\varepsilon))-y^{(l)}(t_{i})\|\\
&\leq
\|\alpha_{i,j}^{(l)}(\varepsilon)\|\max_{a\leq t\leq b}\|y^{(l+1)}(t)\|
\!\cdot\!|t_{i,j}(\varepsilon)-t_{i}|\to0 \quad\mbox{as}\quad\varepsilon\to0+
\end{aligned}
\end{equation}
for each $l\in\{0,\ldots,n+r-1\}$ due to condition (d4) in the $0\leq l\leq n+r-2$ case and due to conditions (d1) and (d3) in the $l=n+r-1$ case. Now formulas \eqref{6.mp.eq1}, \eqref{6.mp.eq23}, and \eqref{f4.11}--\eqref{f4.13} yield the convergence $B(\varepsilon)y\to By$ in $\mathbb{C}^{rm}$ as $\varepsilon\to0+$ for every $y\in(C^{\infty})^{m}$.
\end{proof}

\end{document}